\def\authorsaddresses#1{\dedicatory{#1}}
\newtheorem{theorem}{Theorem}[section]
\newtheorem{lemma}[theorem]{Lemma}
\theoremstyle{definition}
\theoremstyle{remark}
\numberwithin{equation}{section}
\begin{document}
\setcounter{page}{1}



\title[A bootstrap approximation to $L_p$-statistic in length-biased ]{A bootstrap approximation to $L_p$-statistic of kernel density estimator in length-biased model  
 }

\author[ Zamini]{ Raheleh Zamini}

\authorsaddresses{  Faculty of Mathematical Sciences and  Computer, Department  of Mathematics, Kharazmi University, Thehran, Iran. \\
zamini@khu.ac.ir}

\keywords{Bootstrap, Kernel, Length-biased.} 
\maketitle

\begin{abstract}
  This article presents a bootstrap approximation to the eroor quantity $I_n(p)=\int_{\mathbb{R}}{\big|f_n(x)-f(x)\big|}^p d\mu(x), \leq p<\infty,$ where $f_n$ is  the kernel density estimator proposed by Jones \cite{11} for length-biased data. The article establishes one bootstrap central limit theorem for the corresponding bootstrap version of $I_n(p).$ 
 \end{abstract}
\section{Introduction}
Kernel methods are widely used to estimate probability density functions. Suppose $X_i \in \mathbb{R}$ are independent with a common density $f$. Then the kernel density estimate of $f$ is 
\begin{eqnarray*}
f_{n}(x)=\frac{1}{nh_n}\sum_{i=1}^{n}K\big(\frac{x-X_i}{h_n}\big),
\end{eqnarray*}
where, $K$ is a kernel function and $h_n$ is a sequence of (positive) "bandwidths" tending to zero as $n\longrightarrow \infty.$ (See Rosenblatt  \cite{15}). A standard and common measure of $f_n$ is given by the $L_p$ distance 
\begin{eqnarray*}
I_{n}(p)=\int_{\mathbb{R}}{\big| f_{n}(x)-f(x)\big|}^{p} d\mu(x), 1\leq p<\infty,
\end{eqnarray*}
where, $\mu$ is a measure on the Borel sets of $\mathbb{R}$. The mean integrated square error, that is, $E(I_{n}(2))$, is a very popular measure of the distance of $f_n$ from $f$. The other well-investigated case is $p=1.$ In general, $I_{n}(p)$ can be used to carry out tests of hypothesis (asymptotic) for the density $f$. Cs\"{o}rg\H{o}  and Horv\'{a}th \cite{4} obtained a centeral limit theorem for $L_p$ distances $(1 \leq p <\infty)$ of kernel estimators based on complete samples. In the random censorship model, Cs\"{o}rg\H{o} et.al. \cite{2} obtained central limit theorems for $L_p$ distances $(1 \leq p<\infty)$ of kernel estimators. They tested their result in Monte Carlo trails and applied them for goodness of fit. Groeneboom et. al. \cite{8} studied the asymptotic normality of a suitably rescaled version of the $L_1$ distance of the Grenander estimator, using properties of a jump process was introduced by Gronenboom \cite{7}.
In Length-biased setting, Fakoor and Zamini \cite{6}, proved a central limit theorem for $L_p$ distances $(1 \leq p<\infty).$ Also they presented a central limit theorem for approximation of $I_{n}(p).$ Mojirsheibani \cite{14}, presented two approximations for  $L_p$ distances $(1 \leq p<\infty)$ on complete samples. He also defined two approximations for $I_{n}(p)\quad (1 \leq p<\infty)$ based on bootstrap versions of $I_{n}(p)$ with central limit theorems for them.

In this paper a bootstrap version of $I_{n}(p)$ in Length-biased sampling is defined and a central limit theorem for it is defined. In biased sampling, the data are sampled from a distribution different from censoring sampling. In censoring, some of the observations are not completely observed, but are known only to belong to a set. The prototypical example is the time until an event. For an event that has not happened by time t, the value is known only to be in $(t,\infty).$ Truncation is a more severe distortion than censoring. Where censoring replaces a data value by a subset, truncation deletes that value from the sample if it would have been in a certain range. Truncation is an extreme form of biased sampling where certain data values are unobservable. Length- biased data appear naturally in many fields, and particularly in problems related to renewal processes. This special truncation model has been studied by e.g. Wicksel \cite{17}, McFadden \cite{13}, Cox \cite{1}, Vardi \cite{16}.  
\section{Main results}
Let $Y_1,\ldots, Y_n$ be $n$ independent and identically distribute (i.i.d.) nonnegative random variables (r.v.) from a distribution $G,$ defined on ${\mathbb{R}}^{+}=[0,\infty).$ $G$ is called a length-biased distribution corresponding to a given distribution $F$ (also defined on ${\mathbb{R}}^{+})$, if 
\begin{eqnarray*}
G(t)={\mu}^{-1}\int_{0}^{t}xdF(x),\quad for \quad every \quad y \in {\mathbb{R}}^{+},
\end{eqnarray*}
where $\mu=v^{-1}=\int_{0}^{\infty} xdF(x)$ is assumed to be finite. A simple calculation shows that
\begin{eqnarray*}
F(t)=\mu \int_{0}^{t}y^{-1}dG(y), \quad t>0.
\end{eqnarray*}
The empirical estimatior of $F$ can be written by $G{n}(t)=\frac{1}{n}\sum_{i=1}^{n}I(Y_i \leq t),$ namely 
\begin{eqnarray*}
F_{n}(t)={\mu}_{n}\int_{0}^{t}{y}^{-1}dG_{n}(y),
\end{eqnarray*}
where ${\mu}_{n}^{-1}=v_{n}=\int_{0}^{\infty}y^{-1}dG_{n}(y).$
Using $F_{n}(t),$  the following estimator for density function of $f=F'$  in length-biased model is known, 
\begin{eqnarray}\label{dsetare}
f_{n}(t)=\frac{1}{h_n}\int_{\mathbb{R}} K\big(\frac{t-x}{h_n}\big) dF_{n}(x).
\end{eqnarray} (For some refrences about this subject, see the refrences are given in Fakoor and Zamini \cite{6}.)

We start by stating the further notations.
Assume that $T<\tau=\sup\big\{x, G(x)<1\big\}<\infty.$ Throught this paper $N=N(0,1)$ stands for a standard normal r.v. Let 
 \begin{eqnarray}\label{mtp}
m(p)=E{|N|}^{p}{\big(\int_{\mathcal{R}}K^{2}(t) dt\big)}^{p/2}\int_{0}^{T}f^{\frac{p+2}{2}} dt,
\end{eqnarray}
and
 \begin{eqnarray}\label{mttp}
{\sigma}^2={\sigma}_{1}^{2}\int_{0}^{T}f^{p+2}(t) dt{\big(\int_{\mathcal{R}}K^2(t) dt\big)}^{p},
\end{eqnarray}
where,
   \begin{eqnarray*}
  { \sigma}^{2}_{1}&=&{(2\pi)}^{-1} \int_{-\infty}^{+\infty}\bigg(\int_{-\infty}^{+\infty}\int_{-\infty}^{+\infty}{|xy|}^{p}{\big.(1-r^{2}(u)\big.)}^{-1/2}.\\&exp&\big(-\frac{1}{2(1-r^{2}(u))}\big(x^2-2xy r(u) +y^2\big)\big)dx dy-{(E{|N|}^{p})}^{2}\bigg) du,
\end{eqnarray*}
with
\begin{eqnarray*}
r(t)=\frac{\int_{\mathcal{R}}K(u)K(t+u)du}{\int_{\mathcal{R}}K^2(u) du}.
\end{eqnarray*}
Let $B(t,n)$ is a two-parameter Gaussian process with zero mean and covariance function
\begin{eqnarray*}
E\big[B(x, n)B(y,m)\big] = (mn)^{−1/2}
(m ∧ n)\big[G(x \wedge y) − G(x)G(y)\big ] (a \wedge b = \min(a, b)).
\end{eqnarray*}
Based on $B(x,n)$, Horv\'{a}th \cite{9}, defined the mean zero Gaussian process
  \begin{eqnarray}\label{Gamma}
 \Gamma(t,n)=\mu \int_{0}^{t}y^{-1}dB(y,n)-\mu F(t)\int_{0}^{\infty}y^{-1}dB(y,n),
 \end{eqnarray}
 with covariance function
 \begin{eqnarray}\label{EG}
 E\big(\Gamma(x,n)\Gamma(y,m)\big)&=&{(mn)}^{-1/2}(m \wedge n) \big[\sigma( x \wedge y)\nonumber\\
 &\quad -&F(x)\sigma(y)-F(y)\sigma(x)+F(x)F(y)\sigma \big],
  \end{eqnarray}
  such that $\Gamma(t,n)$ approximates the empirical process ${\alpha}_{n}(t)=\sqrt{n}\big[F_n(t)-F(t)\big].$
  In \eqref{EG}, $\sigma(t)={\mu}^{2}\int_{0}^{t}y^{-2}dG(y),$\quad and  $\sigma={\lim}_{t\longrightarrow \infty}\sigma(t)={\mu}^{2}\int_{0}^{\infty}y^{-2} dG(y)$.
 Fakoor and Zamini \cite{6}  used the strong approximation defined in \eqref{Gamma} and investigated asymptotic normality behavior 
\begin{eqnarray}\label{In}
   I_{n}(p)=\int_{0}^{T}{\big|f_{n}(x)-f(x)\big|}^{p}{\big(\frac{x}{\mu}\big)}^{p/2} dF(x),
     \end{eqnarray}
     where $f_n$ is defined in \eqref{dsetare}.
They showed that under some conditions on $h_n,$
 \begin{eqnarray}\label{Indistri}
  {(h_n {\sigma}^{2}(p))}^{-1/2}\big\{{(nh_n)}^{p/2}I_{n}(p)-m(p)\big\}\stackrel{D}{\longrightarrow}N(0,1).
  \end{eqnarray}
 In this article we prove one bootstrap central limit theorem for the corresponding bootstrap version of $I_n(p)$ in \eqref{In}.
 
 Given the random sample, $Y_1,\ldots,Y_n,$ let $Y_{1}^{*},\ldots,
 Y_{n}^{*}$ be a bootstrap sample drawn from $Y_1,\ldots, Y_n$. That is, $Y_{1}^{*},\ldots, Y_{n}^{*}$ be conditionally independent random variables with common distribution function $G_{n}(t)=\frac{1}{n}\sum_{i=1}^{n}I\big(Y_i \leq t\big).$ (See e.g. Cs\"{o}rg\H{o} et.al.\cite{5}). Let
 \begin{eqnarray*}
F_{n,n}(t)={\mu}_{n,n} \int_{0}^{t} y^{-1}dG_{n,n}(y),\\
\end{eqnarray*}
and 
 \begin{eqnarray}\label{meli2}
f_{n,n}(x)=h_n^{-1}\int_{\mathbb{R}} K \Big(\frac{x-y}{h_n}\Big) dF_{n,n}(y),
\end{eqnarray}
where
\begin{eqnarray*}
{\mu}_{n,n}^{-1}=v_{n,n}=\int_{0}^{\tau}y^{-1}dG_{n,n}(y),
\end{eqnarray*}
with $G_{n,n}(y)=\frac{1}{n}\sum_{i=1}^nI\big[Y_i^* \leqslant n \big].$ Using $f_n$ and  $f_{n,n}(x)$ in \eqref{dsetare} and \eqref{meli2} respectively,  one can write the bootstrap version of $I_{n}(p)$ in \eqref{In} by
\begin{eqnarray}\label{Inn}
   I_{n,n}(p)=\int_{0}^{T}{\big| f_{n,n}(x)-f_{n}(x)\big|}^{p}{\big(\frac{x}{{\mu}_{n}}\big)}^{p/2} f_{n}(x) dx.
     \end{eqnarray}
A result  of Cs\"{o}rg\H{o} et al.\cite{5}, shows that there exists a sequence of  Brownian bridges  $\big\{B_{n,n}(t), 0\leq t \leq 1\big\}$ such  that
\begin{eqnarray}\label{eeeqII}
\sup_{-\infty <x<\infty} \Big |\beta_{n,n}(x)-B_{n,n}(G(x)) \Big |=O(n^{\frac{-1}{2}} \log n)\quad a.s.,
\end{eqnarray}
where
\begin{eqnarray}\label{eeeqIII}
\beta_{n,n}(x)=n^{\frac{1}{2}}\big[G_{n,n}(x)-G_n(x)\big].
\end{eqnarray}
Set
\begin{eqnarray}
\alpha_{n,n}(t)=n^{\frac{1}{2}}\big[F_{n,n}(t)-F_n(t)\big].
\end{eqnarray}
Clearly
\begin{eqnarray}\label{eeeq1}
\alpha_{n,n}(t)={\mu}_{n} \int _0^ty^{-1}d\beta_{n,n}(y)
-\Big(\int _0^ty^{-1}dG_{n,n}(y)\Big){\mu}_{nn}{\mu}_n \int _0^{\tau}y^{-1}d \beta_{n,n}(y).
\quad
\end{eqnarray}
This form of $\alpha_{n,n}$ suggests that the approximation processes for $\alpha_{n,n}$ will be
\begin{eqnarray}\label{eeeq2}
\Gamma_{n,n}(t)={\mu}_{n}\int _0^ty^{-1}dB_{n,n}(G(y))-
\Big(\int _0^ty^{-1}dG_{n,n}(y)\Big){\mu}_{nn}{\mu}_n \int _0^{\tau}y^{-1}d B_{n,n}(G(y)).
\quad
\end{eqnarray}
   In this article we use the approximation defined in \eqref{eeeq2} for ${\alpha}_{nn}$ and investigate asymptotic normality  behavior $I_{n,n}$ in \eqref{Inn}.
     The bootstrap is a widely used tool in statistics and, therefore, the properties of $I_{n,n}(p)$  are of great interest in applied as well as in theoretical statistics.
     
     Before stating our result, we list all assumptions used in this paper.\\
     \textbf{Assumptions}\\
$\bold C(1).$ $d\mu(t)=w(t) dt,$ where $w(t)\geq 0$ and continuous on $[0,\tau],$ where $T<\tau<\infty$ and $\tau=\sup\{x, G(x)<1\}.$\\
 $\bold K(1).$ There is a finite interval such that $K$ is continuous and bounded on it and vanishes outside of this interval.\\
$\bold K(2).$ $\int_{\mathbb{R}} K^{2}(t) dt >0.$\\
$\bold K(3).$ $K^{'}$ exists and is bounded.\\
$\bold K(4).$ $\int K(t) dt=1$.\\
$\bold F(1).$ $\Big|\frac{f^{'}(x)}{x^{\frac{1}{2}}f^{\frac{1}{2}}(x)}\Big|$ and $\Big | \frac{f^{\frac{1}{2}}(x)}{x^{\frac{3}{2}}}\Big |$ is uniformly bounded (a.s.) on the $(0,\tau).$\\
$\bold G(1).$ $ {(G(x))}^{\frac{1}{r}}x^{-2}$ is uniformly bounded (a.s.) on the $(0,\tau)$ for some $r> 4$ .\\

  Define $\hat{m}(p)$ and ${\hat{\sigma}}^{2}(p)$ to be the counterparts of $m(p)$ and ${\sigma}^{2}(p),$ after replacing $f$ by $f_{n}$ in \eqref{mtp} and  \eqref{mttp} respectively, i.e.,
\begin{eqnarray}\label{fad20}
\hat{m}(p)&=& E{\big|N\big|}^{p}{\bigg(\int_{\mathbb{R}}K^{2}(t) dt\bigg)}^{\frac{p}{2}}\int_{0}^{T} f^{\frac{p+2}{2}}_{n}(t) dt,\nonumber\\
{\hat{\sigma}}^{2}(p)&=&{\sigma}^{2}_{1}\int_{0}^{T}f^{p+2}_{n}(t) dt{\bigg(\int_{\mathbb{R}}K^{2}(t) dt\bigg)}^{p}.
\end{eqnarray}
\begin{theorem}\label{fad23}
Suppose that Assumptions $\bold{K(1)}$-$\bold{K(4)}$, $\bold{C(1)}$, $\bold{F(1)}$ and $\bold{G(1)}$  hold. If, as $n\rightarrow \infty$
\begin{eqnarray*}
h_{n}\rightarrow 0, \quad \frac{\log \log n }{nh^{3}_n}\rightarrow0,\quad
n^{-\frac{1}{r}} h^{-1}_n\rightarrow 0,\quad \frac{\log n}{nh^{2}_n}\rightarrow0,
\end{eqnarray*}
then for $\hat{m}(p)$ and $\hat{\sigma}(p)$ in \eqref{fad20}, one has
\begin{eqnarray*}
{\big(h_n {\hat{\sigma}}^{2}(p)\big)}^{-\frac{1}{2}}\bigg\{{\big(nh_n\big)}^{\frac{p}{2}} I_{n,n}(p)-\hat{m}(p)\bigg\}\stackrel{D}{\longrightarrow} N(0,1),\quad 1<p<\infty.
\end{eqnarray*}
\end{theorem}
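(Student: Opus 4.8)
The plan is to establish the theorem by reproducing, \emph{conditionally} on the sample $Y_1,\dots,Y_n$, the argument that yields the unconditional limit law \eqref{Indistri} of Fakoor and Zamini, with the true objects $(f,\mu,\alpha_n,G)$ systematically replaced by their empirical counterparts $(f_n,\mu_n,\alpha_{n,n},G_n)$ and the strong approximation \eqref{Gamma} replaced by the bootstrap approximation \eqref{eeeq2}. The first reduction is a linearization. Since $f_{n,n}$ and $f_n$ are kernel smoothings of $F_{n,n}$ and $F_n$ (see \eqref{meli2} and \eqref{dsetare}), and $\alpha_{n,n}(t)=n^{1/2}[F_{n,n}(t)-F_n(t)]$, one has
\begin{eqnarray*}
f_{n,n}(x)-f_n(x)=\frac{1}{n^{1/2}h_n}\int_{\mathbb{R}} K\Big(\frac{x-y}{h_n}\Big)\,d\alpha_{n,n}(y),
\end{eqnarray*}
so that, after the exact cancellation of powers of $n$,
\begin{eqnarray*}
{(nh_n)}^{p/2}I_{n,n}(p)=\int_0^T h_n^{-p/2}\Big|\int_{\mathbb{R}} K\Big(\frac{x-y}{h_n}\Big)\,d\alpha_{n,n}(y)\Big|^p\Big(\frac{x}{\mu_n}\Big)^{p/2}f_n(x)\,dx.
\end{eqnarray*}

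Next I would substitute the bootstrap Gaussian approximation, replacing $\alpha_{n,n}$ by $\Gamma_{n,n}$ from \eqref{eeeq2}. Controlling the resulting error calls for integration by parts together with \eqref{eeeqII}, the boundedness of $K'$ (Assumption \textbf{K(3)}), and the bandwidth/moment hypotheses: the $O(n^{-1/2}\log n)$ approximation rate, after the factor $h_n^{-1}$ from smoothing and the normalization $h_n^{-p/2}$, is absorbed precisely by $n^{-1/r}h_n^{-1}\to 0$ and $\log n/(nh_n^2)\to 0$. I would also discard the ``global'' second term of $\Gamma_{n,n}$: after smoothing, $\int K((x-y)/h_n)\,y^{-1}\,dG_{n,n}(y)=O(h_n)$ uniformly, so its contribution to the normalized process is $O_P(h_n^{1/2})$, exactly as in the unconditional proof. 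This leaves only the dominant local term, and it is convenient to set $\xi_n(x)=h_n^{-1/2}\int_{\mathbb{R}} K((x-y)/h_n)\,d\Gamma_{n,n}(y)$, a process that is conditionally centered Gaussian.

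The core of the argument is the conditional central limit theorem for $\int_0^T|\xi_n(x)|^p\,(x/\mu_n)^{p/2}f_n(x)\,dx$, and this is the step I expect to be the main obstacle. A change of variables $u=(x-y)/h_n$ and $dG=\mu^{-1}y f\,dy$ give, conditionally, $\mathrm{Var}\,\xi_n(x)\to\big(\int K^2\big)\mu f(x)/x$; hence $E|\xi_n(x)|^p=E|N|^p\,(\mathrm{Var}\,\xi_n(x))^{p/2}$, and integrating against the weight $(x/\mu_n)^{p/2}f_n(x)$ reproduces the centering $\hat m(p)$ of \eqref{fad20}. The compact support of $K$ (Assumption \textbf{K(1)}) makes $\xi_n(x)$ and $\xi_n(x')$ conditionally independent once $|x-x'|$ exceeds a fixed multiple of $h_n$, while at lag $t h_n$ the covariance is governed by the correlation $r(t)$. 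I would then split $[0,T]$ into big and small blocks of length of order $h_n$ (a Bernstein decomposition), show the small-block and inter-block contributions are negligible, and verify a Lyapunov condition for the sum of the weakly dependent big-block contributions; the limiting conditional variance is $\hat\sigma^2(p)$, the constant $\sigma_1^2$ emerging from the bivariate Gaussian integral over $r$. Carrying this block CLT out \emph{conditionally} on the sample, so that the data-dependent coefficients $\mu_n,\mu_{nn},G_n$ and the estimate $f_n$ appear throughout, is where the technical weight lies.

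Finally, to pass from the conditional statement to the stated unconditional convergence in distribution, I would invoke the a.s.\ uniform consistency of $f_n$ for $f$ on $[0,T]$ together with $\mu_n\to\mu$ and $G_n\to G$, which hold under \textbf{F(1)}, \textbf{G(1)} and the bandwidth conditions (here $\log\log n/(nh_n^3)\to 0$ controls the density and variance terms). These yield $\int_0^T f_n^{(p+2)/2}\to\int_0^T f^{(p+2)/2}$ and $\int_0^T f_n^{p+2}\to\int_0^T f^{p+2}$, so that $\hat m(p)$ and $\hat\sigma^2(p)$ consistently estimate $m(p)$ and $\sigma^2(p)$ and are the correct centering and scaling. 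A Slutsky-type argument then upgrades the conditional limit to the asserted weak convergence, establishing the theorem for $1<p<\infty$.
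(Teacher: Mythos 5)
Your outline reproduces the paper's reduction steps faithfully: your linearization of $(nh_n)^{p/2}I_{n,n}(p)$ via $\alpha_{n,n}$ is exactly the paper's decomposition \eqref{fad25}, your discarding of the ``global'' second term of $\Gamma_{n,n}$ corresponds to the treatment of $B_n^{(2)}$ inside Lemma \ref{llle1.2}, and your final consistency step for $\hat m(p)$ and $\hat\sigma^2(p)$ is the paper's Lemma \ref{fad21}. The genuine divergence is at the core CLT for the $L_p$ functional of the smoothed Gaussian process. You propose to prove it from scratch, conditionally on the data, by a Bernstein big-block/small-block argument with a Lyapunov condition --- and you yourself flag this as the main obstacle, leaving it entirely as a sketch. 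The paper never does this. It exploits instead the fact that for every $n$ one has $\{B_{n,n}(z)\}\stackrel{D}{=}\{W(z)-zW(1)\}$, so the dominant term $B_n^{(1)}(t)$ of the smoothed bootstrap process equals in distribution (up to the factor $v/v_n\to 1$ a.s.\ and an $O_p(h_n^{(p+1)/2})$ remainder in the $L_p$ comparison) the very process $\Gamma_n^{(2)}$ already analyzed in Lemma 2 of Fakoor and Zamini \cite{6}; the CLT is then imported wholesale rather than reproved. This is the decisive economy: no new blocking argument, and no conditional CLT with the data-dependent quantities $\mu_n,\mu_{nn},G_{n,n}$ appearing inside the blocks --- which is precisely the complication your conditional framing would have to confront. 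Your route is the classical Cs\"{o}rg\H{o}--Horv\'{a}th machinery \cite{4} and could in principle be carried out, but as written the heart of the theorem is missing; note also that even your preliminary claim that $\int K((x-y)/h_n)\,y^{-1}dG_{n,n}(y)=O(h_n)$ uniformly is not free --- the paper needs the four-term decomposition $L_n^{(1)},\dots,L_n^{(4)}$ and the hypothesis $\log n/(nh_n^2)\to 0$ to establish it.

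There is in addition one concrete step in your sketch that fails as stated. You control the substitution $\alpha_{n,n}\mapsto\Gamma_{n,n}$ using the unweighted rate $O(n^{-1/2}\log n)$ of \eqref{eeeqII}. But after integrating by parts in the kernel, what you need is $\sup_{0<t<\tau}|\alpha_{n,n}(t)-\Gamma_{n,n}(t)|$, and both processes are $y^{-1}$-weighted integrals of $\beta_{n,n}-B_{n,n}\circ G$; bounding their difference requires controlling $\int_0^\tau y^{-2}\big|\beta_{n,n}(y)-B_{n,n}(G(y))\big|\,dy$, and the sup-norm bound from \eqref{eeeqII} alone produces a divergent integral at the origin. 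This is exactly why the paper's Lemma \ref{eeeq*} invokes the \emph{weighted} approximation (Remark 2 of Cs\"{o}rg\H{o} and Mason \cite{3}) together with Assumption $\bold{G(1)}$, and why the resulting rate is only $O(n^{-1/r})$ with $r>4$, not $O(n^{-1/2}\log n)$. That, in turn, is the sole reason the hypothesis $n^{-1/r}h_n^{-1}\to 0$ appears in the theorem; your sketch treats this hypothesis as if it were absorbing an $n^{-1/2}\log n$ error, which is a non sequitur. The gap is fixable --- replace the appeal to \eqref{eeeqII} at this step by the weighted-approximation argument of Lemma \ref{eeeq*} --- but without it the substitution step of your proof does not go through.
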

\begin{proof}
At first, notice that with using the inequality
\begin{eqnarray*}
\big|{|a(x)|}^{p}-{|b(x)|}^{p}\big|&\leq& p2^{p-1}{\big|a(x)-b(x)\big|}^{p}+p2^{p-1}{|b(x)|}^{p-1}\big|a(x)-b(x)\big|,\quad
\end{eqnarray*}
$($for $p\geq 1$$),$ and \eqref{do}, we may write
\begin{eqnarray}\label{2}
\big|{v_n}^{\frac{p}{2}}-{v}^{\frac{p}{2}}\big|&\leq& \frac{p}{2}2^{\frac{p}{2}-1}{\big|v_n-v\big|}^{\frac{p}{2}}+\frac{p}{2}2^{\frac{p}{2}-1}{|v|}^{\frac{p}{2}-1}\big|v_n-v\big|,\nonumber\\
&=&O_{p}\Big({\big (\frac{\log \log n}{n}\big )}^{\frac{1}{2}}\Big) .
\end{eqnarray}
Now, start by writing
\begin{eqnarray}\label{fad24}
{\big(h_n {\hat{\sigma}}^{2}(p)\big)}^{-\frac{1}{2}}\bigg\{{\big(nh_n\big)}^{\frac{p}{2}} I_{n,n}(p)-\hat{m}(p)\bigg\}&=&{\big(h_n {\hat{\sigma}}^{2}(p)\big)}^{-\frac{1}{2}}\bigg\{{\big(nh_n\big)}^{\frac{p}{2}} I_{n,n}(p)-m(p)\bigg\}\nonumber\\
&\quad + &{\big(h_n {\hat{\sigma}}^{2}(p)\big)}^{-\frac{1}{2}}\Big(\hat{m}(p)-m(p)\Big)\nonumber\\
&:=& Z_n+V_n.
\end{eqnarray}
By Lemma \ref{fad21}
\begin{eqnarray*}
V_n=o(1)\quad a.s.
\end{eqnarray*}
Let  $\Gamma_{n,3}(t)$ be the term of defined in Lemma \ref{llle1.2}. Then
\begin{eqnarray}\label{fad25}
I_{n,n}(p)&=&\int_{0}^{T} {\bigg|h^{-1}_n \int_{\mathbb{R}} K\big(\frac{t-x}{h_n}\big)d\Big(F_{n,n}(x)-F_{n}(x)\Big)\bigg|}^{p}{\big(tv_n\big)}^{\frac{p}{2}}f_{n}(t) dt\nonumber\\
&=& n^{-\frac{p}{2}}h^{-p}_n\int_{0}^{T}\bigg({\bigg|\int_{\mathbb{R}} K\big(\frac{t-x}{h_n}\big)d\alpha_{n,n}(x)\bigg|}^{p}-{\Big|\Gamma_{n,3}(t)\Big|}^{p}\bigg){\big(tv_n\big)}^{\frac{p}{2}}f_n(t) dt\nonumber\\
&\quad +& n^{-\frac{p}{2}} h^{-p}_{n} \int_{0}^{T} {\Big|\Gamma_{n,3}(t)\Big|}^{p}{\big(tv_n\big)}^{\frac{p}{2}}f_{n}(t) dt\nonumber\\
&:=& R_n+S_n.
\end{eqnarray}
Using the inequality \eqref{setar}
 we can write
\begin{eqnarray}\label{fad26}
n^{\frac{p}{2}}h^{p}_{n}R_{n}&\leq& p2^{p-1} \bigg\{\int_{0}^{T}{\Big|\int_{\mathbb{R}}K\big(\frac{t-x}{h_n}\big)d\Big(\alpha_{n,n}(x)-\Gamma_{n,n}(x)\Big)\Big|}^{p}{\big(tv_n\big)}^{\frac{p}{2}}f_{n}(t)
dt\bigg\}\nonumber\\
&\quad +& p2^{p-1}{\bigg(\int_{0}^{T}{\Big|\Gamma_{n,3}(t)\Big|}^{p}{\big(tv_n\big)}^{\frac{p}{2}}f_n(t) dt\bigg)}^{\frac{(p-1)}{p}}\nonumber\\
&\quad\times&{\bigg(\int_{0}^{T}{\bigg|\int_{\mathbb{R}}K\big(\frac{t-x}{h_n}\big)d\Big(\alpha_{n,n}(x)-\Gamma_{n,n}(x)\Big)\bigg|}^{p}{\big(tv_n\big)}^{\frac{p}{2}}f_n(t)
dt\bigg)}^{\frac{1}{p}}.\nonumber\\
\quad
\end{eqnarray}
Next,  note that by Lemma \ref{eeeq*} and the bounded variation assumption on $K$
\begin{eqnarray*}
\bigg|\int_{\mathbb{R}}K\big(\frac{t-x}{h_n}\big)d\Big(\alpha_{n,n}(x)-\Gamma_{n,n}(x)\Big)\bigg|&\leq& \int_{\mathbb{R}}\Big|\alpha_{n,n}(t-yh_n)-\Gamma_{n,n}(t-yh_n)\Big|\Big|dK(y)\Big|\\
&\leq &\sup_{x} |\alpha_{n,n}(x)-\Gamma_{n,n}(x)|\int_{\mathbb{R}}\Big|dK(y)\Big|\\
&=&O_{p}\big(n^{-\frac{1}{r}}\big).
\end{eqnarray*}
Therefore,
\begin{eqnarray}\label{fad27}
RHS\quad of \eqref{fad26}=O_{p}\big(n^{-\frac{p}{r}}\big)+O_{p}\big(n^{-\frac{1}{r}}\big){\bigg(\int_{0}^{T}{\Big|\Gamma_{n,3}(t)\Big|}^{p}{\big(tv_n\big)}^{\frac{p}{2}}f_n(t)dt\bigg)}^{\frac{(p-1)}{p}}.\quad
\end{eqnarray}
On the other hand, by the proof of Lemma 2 of Fakoor and Zamini \cite{6}, one can write 
\begin{eqnarray*}
{\Gamma}_{n}^{(2)}(x)=P(x){\Gamma}_{n}^{(1)}(x)+o_{p}(h_n),
\end{eqnarray*}
where ${\Gamma}_{n}^{(2)}(x)=\int K\big(\frac{x-y}{h_n}\big) P(y)dW(y),$ with $P(x)={({\sigma}'(x))}^{1/2}$ and ${\Gamma}_{n}^{(1)}(x)=\int K\big(\frac{x-y}{h_n}\big)dW(y).$ But, $\big|{\Gamma}_{n}^{(1)}(x)\big|\stackrel{D}{=} |N|{(h_n)}^{1/2}{\big(\int {K(u)}^{2} du\big)}^{1/2},$ hence by $\bold F(1),$
\begin{eqnarray}\label{noghte}
{\sup}_{0<x<\tau}\big|{\Gamma}_{n}^{(2)}(x)\big|=o_{p}(h_{n}^{1/2}).
\end{eqnarray}
\eqref{noghte} and the proof of Lemma \ref{llle1.2} conclude that 
\begin{eqnarray}\label{fad28}
\sup_{0<x<\tau}\big|\Gamma_{n,3}(t)\big|=O_{p}\big(h^{\frac{1}{2}}_{n}\big).
\end{eqnarray}
Hence, by \eqref{fad22},\eqref{2} and \eqref{fad28}
\begin{eqnarray*}
&\bigg|&\int_{0}^{T}{\Big|\Gamma_{n,3}(t)\Big|}^{p}{\big(tv_n\big)}^{\frac{p}{2}}f_n(t)dt-\int_{0}^{T}{\Big|\Gamma_{n,3}(t)\Big|}^{p}{\big(tv\big)}^{\frac{p}{2}}f(t)dt\bigg|\\
&\leq& \int_{0}^{T}{\Big|\Gamma_{n,3}(t)\Big|}^{p}{\big(tv_n\big)}^{\frac{p}{2}}\Big|f_n(t)-f(t)\Big|dt\\
&\quad +&\Big|v^{\frac{p}{2}}_n-v^{\frac{p}{2}}\Big|\int_{0}^{T}{\Big|\Gamma_{n,3}(t)\Big|}^{p}{\big(t\big)}^{\frac{p}{2}}\big|f(t)\big|dt\\
&\leq&{\big(Tv_n\big)}^{\frac{p}{2}}\sup_{x}\Big|f_n(x)-f(x)\Big|\int_{0}^{T}{\Big|\Gamma_{n,3}(t)\Big|}^{p}dt\\
&\quad +&T^{\frac{p}{2}}\sup_{x}\big|f(x)\big|\Big|v^{\frac{p}{2}}_n-v^{\frac{p}{2}}\Big|\int_{0}^{T}{\Big|\Gamma_{n,3}(t)\Big|}^{p}dt\\
&=&O_{p}\Big({(h_{n})}^{p/2}\Big)\bigg(h^{-1}_{n}O_{p}{\Big(\frac{\log\log n}{n}\Big)}^{\frac{1}{2}}+O_{p}(h_n)\bigg).
\end{eqnarray*}
This last result and Lemma \ref{llle1.2} imply  that
\begin{eqnarray}\label{fad29}
{\big(h^{p+1}_{n}{\sigma}^{2}(p)\big)}^{-\frac{1}{2}}\bigg\{\int_{0}^{T}{\Big|\Gamma_{n,3}(t)\Big|}^{p}{\big(tv_n\big)}^{\frac{p}{2}}f_n(t) dt-h^{\frac{p}{2}}_{n}m(p)\bigg\}\stackrel{D}{\longrightarrow} N(0,1).\quad
\end{eqnarray}
Consequently
\begin{eqnarray}\label{fad30}
\int_{0}^{T}{\Big|\Gamma_{n,3}(t)\Big|}^{p}{\big(tv_n\big)}^{\frac{p}{2}}f_n(t) dt=O_{p}\big(h^{\frac{p}{2}}_{n}\big).
\end{eqnarray}
Putting together \eqref{fad26},\eqref{fad27} and \eqref{fad30}, one finds
\begin{eqnarray}\label{fad31}
R_n=n^{-\frac{p}{2}}h^{-p}_{n}O_{p}\big(n^{-\frac{p}{r}}\big)+n^{-\frac{p}{2}}h^{-p}_{n}O_{p}\big(n^{-\frac{1}{r}}\big)O_p(h^{\frac{(p-1)}{2}}_{n}).
\end{eqnarray}
The term $Z_n$ that appears in \eqref{fad24} can now be handled as follows
\begin{eqnarray*}
Z_n&=&{\big(h_n{\hat{\sigma}}^{2}(p)\big)}^{-\frac{1}{2}}\bigg\{{\big(nh_n\big)}^{\frac{p}{2}}\Big(R_n+S_n\Big)-m(p)\bigg\}\\
&=&{\big(h_n{\hat{\sigma}}^{2}(p)\big)}^{-\frac{1}{2}}{\big(nh_n\big)}^{\frac{p}{2}}R_n +{\big(h_n{\hat{\sigma}}^{2}(p)\big)}^{-\frac{1}{2}}\bigg\{{\big(nh_n\big)}^{\frac{p}{2}}S_n-m(p)\bigg\}\\
&:=& Z_{n,1}+Z_{n,2}.
\end{eqnarray*}
The fact that $\frac{{\hat{\sigma}}^{2}(p)}{{\sigma}^{2}(p)}=1+o(1)\quad a.s.,$ together with \eqref{fad29} imply
\begin{eqnarray*}
Z_{n,2}={\big(h^{p+1}_{n}{\hat{\sigma}}^{2}(p)\big)}^{-\frac{1}{2}}\bigg\{\int_{0}^{T}{\Big|\Gamma_{n,3}(t)\Big|}^{p}{\big(tv_n\big)}^{\frac{p}{2}}f_n(t) dt-h^{\frac{p}{2}}_{n}m(p)\bigg\}\stackrel{D}{\longrightarrow} N(0,1).\quad
\end{eqnarray*}
As for $Z_{n,1},$ the bound in \eqref{fad31} gives
\begin{eqnarray*}
\big|Z_{n,1}\big|&=& h^{-\frac{(p+1)}{2}}_{n}n^{-\frac{p}{r}}o_p(1)+h^{-1}_nn^{-\frac{1}{r}}o_p(1)\\
&\leq& h^{-p}_nn^{-\frac{p}{r}}o_p(1)+h^{-1}_nn^{-\frac{1}{r}}o_p(1)\\
&=&o_p(1).
\end{eqnarray*}
This completes the proof of Theorem \ref{fad23}.
\end{proof}
\section*{Appendix}
In order to make the proof of the main result easier, some auxiliary results and notations are needed.

The following inequality will be useful later on. Let $1\leq p<\infty,$ then for functions $q$ and $u$ in $L_p$ we have
\begin{eqnarray}\label{setar}
\int_{0}^{\infty}\Big|{\big|q(t)\big|}^{p}-{\big|u(t)\big|}^{p}\Big| d(\mu(t))&\leq& p2^{p-1}\int \big|q(t)-u(t){\big|}^{p}d(\mu(t))\nonumber\\
&\quad +&p2^{p-1}{\big(\int_{0}^{\infty}{|u(t)|}^{p}d(\mu(t))\big)}^{1-\frac{1}{p}}\nonumber\\
&\quad \times&{\big(\int_{0}^{\infty}{\big|q(t)-u(t)\big|}^{p}d\mu(t)\big)}^{1/p}.
\end{eqnarray}
\begin{lemma}\label{eeeq*}
Under Assumption $\bold{G(1)},$ one can write
\[\sup_{0<t<\tau}\Big|\alpha_{n,n}(t)-\Gamma_{n,n}(t)\Big|=O(n^{-1/r}) \quad a.s.\]
\end{lemma}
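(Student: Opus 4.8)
The plan is to reduce the claim to a single integral functional of the strong-approximation error and then to control that functional by isolating the neighbourhood of the origin, where the weight $y^{-1}$ is singular. First I would set $\Delta_{n,n}(y):=\beta_{n,n}(y)-B_{n,n}(G(y))$ and subtract \eqref{eeeq2} from \eqref{eeeq1}, obtaining
\begin{eqnarray*}
\alpha_{n,n}(t)-\Gamma_{n,n}(t)&=&{\mu}_n\int_0^t y^{-1}\,d\Delta_{n,n}(y)\\
&\quad-&\Big(\int_0^t y^{-1}\,dG_{n,n}(y)\Big){\mu}_{nn}{\mu}_n\int_0^{\tau}y^{-1}\,d\Delta_{n,n}(y).
\end{eqnarray*}
Since ${\mu}_n\to\mu$ and ${\mu}_{nn}\to\mu$ a.s. (both are reciprocals of averages of $Y_i^{-1}$, resp. $(Y_i^*)^{-1}$, which converge by the strong law), and since $\int_0^t y^{-1}\,dG_{n,n}(y)={\mu}_{nn}^{-1}F_{n,n}(t)\le{\mu}_{nn}^{-1}$, all the multiplicative coefficients are bounded a.s. It therefore suffices to prove
\[
\sup_{0<t\le\tau}\Big|\int_0^t y^{-1}\,d\Delta_{n,n}(y)\Big|=O(n^{-1/r})\quad a.s.
\]

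Next I would integrate by parts, writing $\int_0^t y^{-1}\,d\Delta_{n,n}(y)=t^{-1}\Delta_{n,n}(t)+\int_0^t y^{-2}\Delta_{n,n}(y)\,dy$; the boundary term at $0$ vanishes because the law of the iterated logarithm for the Brownian bridge $B_{n,n}$ gives $y^{-1}|B_{n,n}(G(y))|=O\big(y^{-1}\sqrt{G(y)\log\log(1/G(y))}\big)$, which tends to $0$ under Assumption $\bold{G(1)}$ (that assumption yields $G(y)\le Cy^{2r}$, whence $y^{-1}\sqrt{G(y)}=O(y^{r-1})$ with $r>4$). I would then choose a cutoff $a_n\downarrow0$ and split the integral at $a_n$. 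On $[a_n,\tau]$ the uniform bound \eqref{eeeqII}, $\sup_x|\Delta_{n,n}(x)|=O(n^{-1/2}\log n)$ a.s., gives
\[
\Big|\int_{a_n}^t y^{-2}\Delta_{n,n}(y)\,dy\Big|\le\sup_x|\Delta_{n,n}(x)|\int_{a_n}^{\tau}y^{-2}\,dy=O\big(a_n^{-1}n^{-1/2}\log n\big)\quad a.s.,
\]
and $t^{-1}|\Delta_{n,n}(t)|$ is of the same order once $t\ge a_n$.

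The delicate contribution is that of $[0,a_n]$, and this is exactly where Assumption $\bold{G(1)}$ does the real work: here the crude uniform bound must be replaced by an estimate that exploits the decay of $G$ near the origin, using $\bold{G(1)}$ to turn the non-integrable weight $y^{-2}$ into the genuinely integrable quantity $\lesssim y^{r-2}$ and thereby bounding $\int_0^{a_n}y^{-2}|\Delta_{n,n}(y)|\,dy$ by a positive power of $a_n$ (up to logarithmic factors). Balancing this near-origin power of $a_n$ against the outer term $a_n^{-1}n^{-1/2}\log n$ and choosing $a_n$ as the appropriate negative power of $n$ produces the asserted rate $O(n^{-1/r})$, the exponent $r>4$ of $\bold{G(1)}$ being precisely what fixes it. The argument runs parallel to the proof of Lemma~2 in Fakoor and Zamini \cite{6} for the ordinary empirical process $\alpha_n$, the only substantive change being that the bootstrap strong approximation \eqref{eeeqII} is used in place of the classical one. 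I expect the main obstacle to be this near-origin estimate: obtaining an almost-sure bound on $\int_0^{a_n}y^{-2}|\Delta_{n,n}(y)|\,dy$ that is correctly matched to the outer region requires the precise interplay, quantified by $\bold{G(1)}$, between the singular weight $y^{-2}$ and the decay of $G$, controlled through the law of the iterated logarithm for $B_{n,n}$ together with the oscillation of $\beta_{n,n}$ near $0$.
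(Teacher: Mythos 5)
Your opening reduction coincides exactly with the paper's: you subtract \eqref{eeeq2} from \eqref{eeeq1}, absorb the factors ${\mu}_n$, ${\mu}_{nn}$ and $\int_0^t y^{-1}dG_{n,n}$ (noting $\big(\int_0^t y^{-1}dG_{n,n}\big)\mu_{nn}\le 1$), and integrate by parts so that everything hinges on $\sup_t t^{-1}\big|\Delta_{n,n}(t)\big|$ and $\int_0^\tau y^{-2}\big|\Delta_{n,n}(y)\big|\,dy$, with $\Delta_{n,n}=\beta_{n,n}-B_{n,n}(G(\cdot))$. But from that point on the proposal is a plan, not a proof: the near-origin estimate that you yourself call ``the main obstacle'' is the entire mathematical content of the lemma, and you leave it unresolved, asserting only that ``balancing'' a cutoff $a_n$ ``produces the asserted rate.'' You never exhibit a bound for $\int_0^{a_n}y^{-2}|\Delta_{n,n}(y)|\,dy$, and the tools you gesture at are genuinely delicate: the law of the iterated logarithm gives an almost-sure constant for \emph{one fixed} Brownian bridge, whereas $\{B_{n,n}\}$ is a \emph{different} bridge for each $n$, so a single-bridge LIL bound does not transfer uniformly in $n$; likewise the small-$y$ oscillation of the bootstrap process $\beta_{n,n}$ needs its own quantitative argument (e.g., a Borel--Cantelli step showing no observations land in $[0,a_n]$, which is where $r>4$ and $G(y)\le Cy^{2r}$ would enter). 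Even the outer region needs care: \eqref{eeeqII} forces $a_n\gtrsim n^{-(1/2-1/r)}\log n$ to reach the rate $n^{-1/r}$, and the inner bound must then be checked against this specific choice. None of this bookkeeping is carried out.

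The paper closes precisely this gap with one imported tool that your sketch is, in effect, attempting to re-derive by hand: Remark 2 of Cs\"{o}rg\H{o} and Mason \cite{3}, a \emph{weighted} version of \eqref{eeeqII} for bootstrapped empirical processes,
\[
\sup_{0<t<\tau}\frac{\big|\beta_{n,n}(t)-B_{n,n}(G(t))\big|}{\big(G(t)(1-G(t))\big)^{1/2-1/r}}=O_p\big(n^{-1/r}\big).
\]
With this, no cutoff is needed at all: since $r>4$ and $G\le 1$ give $\big(G(t)(1-G(t))\big)^{1/2-1/r}\le (G(t))^{1/r}$, Assumption $\bold{G(1)}$ bounds both $\sup_t t^{-2}(G(t))^{1/r}$ and $\int_0^\tau y^{-2}(G(y))^{1/r}dy$, so the boundary term and the integral term are each $O_p(n^{-1/r})$ in a single step --- the weight in the approximation theorem does exactly the work near the origin that your splitting argument would have to reconstruct. (Note also that this route, like the paper's own proof, yields a rate in probability; the almost-sure phrasing of the lemma is the paper's looseness, not something your approach could improve without extra input.) So the verdict is: right skeleton, but the decisive step is missing, and the missing idea is the weighted strong approximation of \cite{3}.
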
 
\begin{proof}
Applying  \eqref{eeeq1} and \eqref{eeeq2}, one can obtain 
\begin{eqnarray*}
\sup_{0<t<\tau}\Big | \alpha_{n,n}(t)-\Gamma _{n,n}(t) \Big |
&\leq&
2v^{-1}_n \sup_{0<t<\tau}\bigg|\frac{\beta_{n,n}(t)-B_{n,n}\big(G(t)\big)}{t}\bigg|\\
&\quad+&2v^{-1}_n\int_{0}^{\tau} y^{-2} \Big|\beta_{n,n}(y)-B_{n,n}\big(G(y)\big)\Big|dy\\
&\leq&
2v^{-1}_n\tau\sup_{0<t<\tau}\Big(\frac{{(G(t))}^{1/r}}{t^2}\Big)\sup_{0<t<\tau}\bigg|\frac{\beta_{n,n}(t)-B_{n,n}\big(G(t)\big)}{{\big(G(t)(1-G(t))\big)}^{1/2-1/r}}\bigg|\\
&\quad+&2v^{-1}_n\sup_{0<t<\tau}\bigg|\frac{\beta_{n,n}(t)-B_{n,n}\big(G(t)\big)}{{\big(G(t)(1-G(t))\big)}^{1/2-1/r}}\bigg|\\
&\quad \times&\int_{0}^{\tau}y^{-2}{\big(G(y)\big)}^{1/r}\\
&=&O_{p}(n^{-1/r})+O_{p}(n^{-1/r})\\
&=&O_{p}(n^{-1/r}),
\end{eqnarray*}
where last equality results from Assumption $\bold {G(1)}$ 
 and Remark 2 of Cs\"{o}rg\H{o} and Mason \cite{3}.
\end{proof} 
\begin{lemma}\label{llle1.2}
Let $\Gamma_{n,3}(t)=\int_{\mathbb{R}}K\big(\frac{t-x}{h_n}\big) d \Gamma_{n,n}(x)$.
Suppose that  Assumptions $\bold{K(1)}$-$\bold{K(3)},$  $\bold{C(1)},$ $\bold{G(1)},$ $\bold{F(1)}$ and conditions
\begin{eqnarray*}
h_{n}\rightarrow 0,\quad \frac{\log n}{nh^{2}_{n}}\rightarrow0,
\end{eqnarray*}
hold, then we can write
\begin{eqnarray*}
(h_n^{p+1}\sigma^2(p))^{-\frac{1}{2}}
\bigg\{\int_{0}^T \big |\Gamma_{n,3}(t) \big |^p (tv)^{\frac{p}{2}}f(t)dt-h_n^{\frac{p}{2}}m(p)\bigg\}\stackrel{D}{\longrightarrow}N(0,1).
\end{eqnarray*}
\end{lemma}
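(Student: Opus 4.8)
The plan is to mirror the route by which Fakoor and Zamini \cite{6} obtained the central limit theorem \eqref{Indistri} for the empirical process $\alpha_n$, the one genuinely new feature being that the Gaussian ingredient here is the bootstrap process $\Gamma_{n,n}$ of \eqref{eeeq2}, built from the Brownian bridge $B_{n,n}$, rather than the process $\Gamma(\cdot,n)$ of \eqref{Gamma}. First I would freeze the empirical coefficients: since $\mu_n\to\mu$, $\mu_{nn}\to\mu$ and $\int_0^{t}y^{-1}dG_{n,n}(y)\to\mu^{-1}F(t)$ almost surely, the leading term $\mu_n\int_0^{t}y^{-1}dB_{n,n}(G(y))$ of $\Gamma_{n,n}$ has increments whose local variance is $\mu^2 y^{-2}dG(y)=\sigma'(y)\,dy$, with $\sigma(t)=\mu^2\int_0^{t}y^{-2}dG(y)$, so that $\sigma'(t)=\mu t^{-1}f(t)$. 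The remaining ``bridge'' term of \eqref{eeeq2}, carrying the factor $F(t)$, is smooth, and after convolution with $K$ (which integrates to one, $\bold{K(4)}$) it contributes only $O_p(h_n)$, negligible against the $O_p(h_n^{1/2})$ size of the main part. Hence $\Gamma_{n,3}(t)=\int K((t-y)/h_n)\,d\Gamma_{n,n}(y)$ agrees, up to negligible error and in terms of its covariance, with the smoothed Wiener integral $\Gamma_n^{(2)}(t)=\int K((t-y)/h_n)P(y)\,dW(y)$, $P=(\sigma')^{1/2}$, the very object handled in the proof of Theorem \ref{fad23}.

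Next I would localise. By the continuity carried by $\bold{F(1)}$ and the finite support of $K$ ($\bold{K(1)}$), over a window of width $O(h_n)$ one replaces $P(y)$ by $P(t)$ and writes $\Gamma_n^{(2)}(t)=P(t)\Gamma_n^{(1)}(t)+o_p(h_n)$ with $\Gamma_n^{(1)}(t)=\int K((t-y)/h_n)\,dW(y)$, exactly as in the derivation of \eqref{noghte}; this is where the proof of Lemma~2 of \cite{6} and the bandwidth condition $\log n/(nh_n^2)\to0$ enter, to render the replacement uniformly negligible. Substituting and using $\sigma'(t)=\mu t^{-1}f(t)$ collapses the weight, since $|P(t)|^{p}(tv)^{p/2}f(t)=f^{(p+2)/2}(t)$, so that
\begin{eqnarray*}
\int_0^T\big|\Gamma_{n,3}(t)\big|^{p}(tv)^{p/2}f(t)\,dt=\int_0^T f^{(p+2)/2}(t)\,\big|\Gamma_n^{(1)}(t)\big|^{p}\,dt+o_p\big(h_n^{p/2}\big),
\end{eqnarray*}
and the whole statement reduces to a central limit theorem for $J_n=\int_0^T\psi(t)\,|\Gamma_n^{(1)}(t)|^{p}\,dt$ with $\psi=f^{(p+2)/2}$.

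The core is then the CLT for $J_n$, carried out as for $L_p$-norms of density estimators (Cs\"{o}rg\H{o} and Horv\'{a}th \cite{4}). The standardised field $Y_n(t)=\Gamma_n^{(1)}(t)/(h_n\int_{\mathbb{R}}K^2)^{1/2}$ is stationary Gaussian with correlation $r((s-t)/h_n)$, $r$ as defined before the theorem, and since $K$ vanishes off a finite interval, $Y_n(t)$ and $Y_n(s)$ are independent once $|s-t|$ exceeds a fixed multiple of $h_n$; thus $Y_n$ is $h_n$-dependent. I would split $[0,T]$ into alternating big blocks of length $a_nh_n$ (with $a_n\to\infty$, $a_nh_n\to0$) and separating small blocks longer than the dependence range: the big-block integrals are then independent, the small-block integrals are negligible in $L^2$, and a Lyapunov CLT applies to the big-block sum, its $(2+\delta)$-moments being controlled by the Gaussianity of $Y_n$ and the boundedness of $\psi$. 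A moment computation gives $E\,J_n=E|N|^{p}(h_n\int_{\mathbb{R}}K^2)^{p/2}\int_0^T\psi\,dt=h_n^{p/2}m(p)$, and, after the change of variables $s=t+h_nu$,
\begin{eqnarray*}
\mathrm{Var}(J_n)\sim h_n\Big(h_n\!\int_{\mathbb{R}}K^2\Big)^{p}\int_0^T\psi^2(t)\,dt\int_{-\infty}^{\infty}\mathrm{Cov}\big(|N_1|^{p},|N_2|^{p}\big)\,du,
\end{eqnarray*}
where $(N_1,N_2)$ is standard bivariate normal with correlation $r(u)$. As $\psi^2=f^{p+2}$ and the last integral is exactly $\sigma_1^2$, this equals $h_n^{p+1}\sigma^2(p)$ by \eqref{mttp}, so that $(h_n^{p+1}\sigma^2(p))^{-1/2}\{J_n-h_n^{p/2}m(p)\}\to N(0,1)$, which is the claim.

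The main obstacle is precisely this last step: justifying the big-block/small-block decomposition, i.e.\ showing that the small blocks and the boundary effects in the variance are negligible and that the Lyapunov ratio vanishes for the admissible rates $a_n\to\infty$, $a_nh_n\to0$, and then matching the limiting variance constant with the bivariate-normal covariance integral $\sigma_1^2$. The preliminary reduction of $\Gamma_{n,3}$ to $\Gamma_n^{(1)}$ is comparatively routine once the covariance identity \eqref{EG} and the estimate \eqref{noghte} are available.
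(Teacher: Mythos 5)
Your skeleton coincides with the paper's: decompose $\Gamma_{n,3}$ into the integral against $dB_{n,n}(G(\cdot))$ (the paper's $B_n^{(1)}$) plus the contribution of the second term of \eqref{eeeq2} (the paper's $B_n^{(2)}$), show the latter is $O_p(h_n)$ against the $O_p(h_n^{1/2})$ main part, and transfer the CLT through the perturbation inequality \eqref{setar}; your constants also check out, since $|P(t)|^p(tv)^{p/2}f(t)=f^{(p+2)/2}(t)$ reproduces the mean $h_n^{p/2}m(p)$ of \eqref{mtp} and your covariance integral is exactly $\sigma_1^2$, giving the variance $h_n^{p+1}\sigma^2(p)$ of \eqref{mttp}. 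Where you genuinely diverge is the Gaussian core: you propose to prove the CLT for $J_n=\int_0^T f^{(p+2)/2}(t)|\Gamma_n^{(1)}(t)|^p dt$ from scratch by a big-block/small-block plus Lyapunov argument for the $h_n$-dependent stationary field. The paper does no such work: it identifies $H_n^{(1)}(t)\stackrel{D}{=}\frac{v}{v_n}\Gamma_n^{(2)}(t)$, strips the $W(1)$ drift, and then simply cites Lemma 2 of Fakoor and Zamini \cite{6} for the CLT of $\int_0^T|\Gamma_n^{(2)}(t)|^p(tv)^{p/2}f(t)\,dt$. So your hardest announced step replaces a one-line citation, while the step you dispatch in one sentence is the one the lemma actually has to prove — and that is where the gap lies.

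The gap: the second term of \eqref{eeeq2} is not smooth. Its $t$-dependence runs through $\int_0^t y^{-1}dG_{n,n}(y)=\frac{1}{n}\sum_{i}(Y_i^*)^{-1}I(Y_i^*\le t)$, a random step function of the bootstrap sample, so ``smooth, hence convolution with a kernel of integral one gives $O_p(h_n)$'' is not an argument. What must be bounded is $\int_{\mathbb{R}}K\big(\frac{t-x}{h_n}\big)x^{-1}dG_{n,n}(x)$; only its component along $dG$ is $O(h_n)$, and the fluctuation components along $n^{-1/2}d\beta_{n,n}$ and $n^{-1/2}d\beta_n$ have to be controlled by integration by parts together with the strong approximation \eqref{eeeqII} of Cs\"{o}rg\H{o}, Horv\'{a}th and Kokoszka \cite{5} for the bootstrapped empirical process and the Koml\'{o}s--Major--Tusn\'{a}dy approximation \eqref{eeeq100} for $\beta_n$ (this is the chain \eqref{variance}--\eqref{fad14} in the paper), producing errors of size $O\big(\frac{\log n}{nh_n}\big)+O_p\big(n^{-1/2}h_n^{1/2}\big)$, which are $o(h_n)$ precisely because $\frac{\log n}{nh_n^2}\to0$. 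That is where the bandwidth hypothesis of the lemma is consumed; you instead attached it to the replacement of $P(y)$ by $P(t)$ inside $\Gamma_n^{(2)}$, which is a sign that this bootstrap-specific control is absent from your plan rather than merely compressed. A secondary omission of the same kind: passing from the bridge $B_{n,n}$ to a Wiener process ``in terms of covariance'' also leaves behind the drift term built from $-G(x)W(1)$ (the paper's $H_n^{(2)}$), which is yet another $O_p(h_n)$ contribution that must be removed through \eqref{setar} before any stationary-field CLT for $\Gamma_n^{(1)}$ can be invoked.
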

\begin{proof}At first, note that 
\begin{eqnarray}\label{eeeqI'}
\Gamma_{n,3}(t)&=&\frac{1}{v_n}\int_{\mathbb{R}}K\big(\frac{t-x}{h_n}\big)x^{-1}dB_{n,n}(G(x))\nonumber\\
&\quad -&\frac{\Big(\int_{0}^{\tau}y^{-1}dB_{n,n}(G(y))\Big)}{v_n v_{nn}} \int_{\mathbb{R}}K\big(\frac{t-x}{h_n}\big)x^{-1}dG_{n,n}(x)\nonumber\\
&:=&B_n^{(1)}(t)+B_n^{2}(t).
\end{eqnarray}
The fact that $\{B_{n,n}(z), 0\leqslant z\leqslant 1\} \stackrel{D}{=}\{W(z)-zW(1), 0\leqslant z \leqslant 1\}$
for all $n\geqslant 1$, implies that 
\begin{eqnarray}\label{eeeq1.8}
B_n^{(1)}(t)&\stackrel{D}{=}&\frac{1}{v_n}\int_{\mathbb{R}}K\big(\frac{t-x}{h_n}\big)x^{-1}d\Big(W\big(G(x)\big)-G(x)W(1)\Big)\nonumber\\
&:=&B_n^{\prime(1)}(t).
\end{eqnarray}
Also, with using Theorem of James \cite{10} and \textbf{G(1)},  one can obtain,  for any $0<\delta <\frac{1}{2}-\frac{1}{r}$
\begin{eqnarray}\label{do}
 \big |v-v_{n}\big |&=&\Big |\int_{0}^{\tau} y^{-1} d\big (G_{n}(y)-G(y)\big )\Big |\nonumber \\
&=& n^{-\frac{1}{2}}\big |\int_{0}^{\tau} y^{-2} {\beta}_{n}(y)dy\big |\nonumber\\
&\leq &   n^{-\frac{1}{2}}\sup_{0<y<\tau}{(G(y))}^{\delta-\frac{1}{2}}\big |\beta_{n}(y) \big  |\Big |\int_{0}^{\tau} y^{-2} {(G(y))}^{\frac{1}{2}-\delta}dy\Big |\nonumber\\
&=& O\Big ({\big(\frac{\log \log n}{n}\big)}^{\frac{1}{2}}\Big )\quad a.s.
\end{eqnarray}
 \eqref{setar}, \eqref{do} and Lemma 2 of Fakoor and Zamini \cite{6}, imply that
\begin{eqnarray*}
\big(h_n^{p+1}\sigma^2(p)\big)^{-\frac{1}{2}}\bigg\{ \int_{0}^T\Big |\frac{v}{v_n}\Gamma_n^{(2)}(t) \Big |^p(tv)^{\frac{p}{2}}f(t)dt-h_n^{\frac{p}{2}}m(p)\bigg\} \stackrel{D}{\longrightarrow}N(0,1),\quad
\end{eqnarray*}
where $\Gamma_n^{(2)}(x)$ is introduced in Lemma 2 of Fakoor and Zamini \cite{6}.

Now, since
\begin{eqnarray*}
H_n^{(1)}(t):=\frac{1}{v_n}\int_{\mathbb{R}}K\big(\frac{t-x}{h_n}\big)x^{-1}d\Big(W(G(x))\Big)\stackrel{D}{=}\frac{v}{v_n}\Gamma_n^{(2)}(t),
\end{eqnarray*}
one can write
\begin{eqnarray}\label{eeeq1.11}
\big(h_n^{p+1}\sigma^2(p)\big)^{-\frac{1}{2}}\bigg\{ \int_{0}^T\Big | H_n^{(1)}(t)\Big |^p(tv)^{\frac{p}{2}}f(t)dt-h_n^{\frac{p}{2}}m(p)\bigg\} \stackrel{D}{\longrightarrow}N(0,1).\quad
\end{eqnarray}
Also
\begin{eqnarray*}
H_n^{(2)}(t):=-W(1)\frac{vh_n}{v_n}\int_{\mathbb{R}} K(u)f(t-uh_n)du,
\end{eqnarray*}
is normally distributed with mean $0$ and variance
\begin{eqnarray*}
\sum_{n,t}=\frac{v^2}{v_n^2}h_n^2\bigg(\int_{\mathbb{R}} K(u)f(t-uh_n)du\bigg)^2.
\end{eqnarray*}
Therefore, for each $t$,
\begin{eqnarray*}
\Big|H_n^{(2)}(t) \Big|\stackrel{D}{=}|N|\frac{v}{v_n}h_n\bigg|\int_{\mathbb{R}} K(u)f(t-uh_n)du\bigg|.
\end{eqnarray*}
Since
\begin{eqnarray*}
\frac{v}{v_n} h_n \bigg |\int_{\mathbb{R}} K(u) f(t-uh_n) du \Big| \leqslant \frac{v}{v_n} h_n  \sup_{0<x<\tau}f(x) \int_{-1}^{+1} |K(u)| du =O_p (h_n),
\end{eqnarray*}
hence
\begin{eqnarray}
\label{eeeq**}
H^{(2)} _n(t) = O_p (h_n).
\end{eqnarray}
Now, by   \eqref{setar}, \eqref{eeeq1.11} and  \eqref{eeeq**}, one can see
\begin{eqnarray*}
\bigg| \int ^T_{0} \Big| {B'}_n^{(1)} (t)  \Big|^p (tv) ^{\frac{p}{2}} f(t) dt &-&\int ^T _{0} \Big| H_n ^{(1)} (t) \Big| ^p  (tv)^{\frac{p}{2}} f(t) dt \bigg|
\\
&\leqslant& p2^{p-1} \int ^T_0 \Big| H^{(2)} _n (t) \Big| ^p (tv)^{\frac{p}{2}} f(t) dt \\
&\quad+& p2^{p-1} \bigg( \int ^T_0 \Big|  H^{(1)} _n (t)\Big|^p (tv)^{\frac{p}{2}}  f(t) dt\bigg)^{1-\frac{1}{p}} \\
&\quad \times & \bigg(\int ^T_0 \Big| H_n ^{(2)} (t)  \Big|^p (tv) ^{\frac{p}{2}} f(t) dt \bigg) ^{\frac{1}{p}}\\
&=& O_p (h_n ^{\frac{p+1}{2}})
\end{eqnarray*}
Last result with together  \eqref{eeeq1.11}  and \eqref{eeeq1.8} conclude that
\begin{eqnarray}
\label{eeeqII'}
\Big(h_n ^{p+1} \sigma ^2 (p)\Big)^{-\frac{1}{2}} \left\{\int ^T_0 \Big| B^{(1)}_n (t) \Big|^p (tv)^{\frac{p}{2}} f(t) dt - h_n ^{\frac{p}{2}} m(p) \right\} \stackrel{D}{\longrightarrow} N(0,1).
\quad
\end{eqnarray}

The term $B_n ^{(2)} (t)$  can be handled as follows. \\
Since $\{B_{n,n} (z) , 0\leqslant z \leqslant 1 \} \stackrel{D}{=} \{ W(z) - zW(1) , 0 \leqslant z \leqslant 1\}$ for all $n \geqslant 1$, hence
\begin{eqnarray}\label{fad1}
\Big|B_n ^{(2)}(t)\Big|\stackrel{D}{=}\Big|\frac{1}{v_n} \int ^{\tau} _0 y^{-1} d \Big[W(G(y)) - G(y) W(1) \Big]\frac{1}{v_{nn}}\int_{\mathbb{R}}K(\frac{t-x}{h_n}) x^{-1} dG_{n,n}(x)\Big|.
\quad
\end{eqnarray}
But
\begin{eqnarray}\label{fad2}
&\Big|&\frac{1}{v_n} \int ^{\tau} _0 y^{-1} d \Big(W(G(y)) - G(y) W(1) \Big)\Big|
\Big|\frac{1}{v_{nn}}\int_{\mathbb{R}}K(\frac{t-x}{h_n}) x^{-1} dG_{n,n}(x)\Big|\nonumber\\
&\leq&\bigg(\frac{1}{v_n}\bigg|\int_{0}^{\tau}y^{-1}d\Big(W\big(G(y)\big)\Big)\bigg|
+\frac{1}{v_n}\bigg|W(1)\int_{0}^{\tau}y^{-1} g(y) dy\bigg|\bigg)\nonumber\\
&\quad\times&\bigg|\frac{1}{v_{nn}}\int_{\mathbb{R}}K(\frac{t-x}{h_n}) x^{-1} dG_{n,n}(x)\bigg|\nonumber\\
&:=& \frac{1}{v_n}\Big(A_1+A_2\Big)\times A_3.
\end{eqnarray}
Now, since
\begin{eqnarray*}
\big|A_1\big|\stackrel{D}{=}|N|{\bigg(\int_{0}^{\tau}y^{-2}g(y)dy\bigg)}^{\frac{1}{2}},
\end{eqnarray*}
therefore,
\begin{eqnarray}\label{fad3}
\frac{1}{v_n}A_1=O_{p}(1).
\end{eqnarray}
Also
\begin{eqnarray*}
\big|A_2\big|\stackrel{D}{=}|N|{\bigg(\int_{0}^{\tau}y^{-1}g(y)dy\bigg)},
\end{eqnarray*}
hence
\begin{eqnarray}\label{fad4}
\frac{1}{v_n}A_2=O_{p}(1).
\end{eqnarray}
Next, let $\big\{B_{n,n}(x); 0\leq x \leq 1\big\}$ be the sequence of Brownian brideges in \eqref{eeeqII}. Now, one can write
\begin{eqnarray}\label{variance}
A_3&=&\frac{1}{v_{n,n}}\bigg|\int_{\mathbb{R}}x^{-1}K\big(\frac{t-x}{h_n}\big)dG_{n,n}(x)\bigg|\nonumber\\
& \leq & \tau \bigg|\int_{\mathbb{R}}x^{-1}K\big(\frac{t-x}{h_n}\big)dG_{n,n}(x)\bigg|\nonumber\\
& \leq & n^{-1/2}\bigg|\int_{\mathbb{R}}x^{-1}K\big(\frac{t-x}{h_n}\big)d\Big(\beta_{n,n}(x)-B_{n,n}(G(x))\Big)\bigg|\nonumber\\
&\quad +& n^{-1/2}\bigg|\int_{\mathbb{R}}x^{-1}K\big(\frac{t-x}{h_n}\big)d\Big(B_{n,n}(G(x))\Big)\bigg|\nonumber\\
&\quad +& n^{-1/2}\bigg|\int_{\mathbb{R}}x^{-1}K\big(\frac{t-x}{h_n}\big)d\Big(\beta_{n}(x)\Big)\bigg|\nonumber\\
&\quad +& \bigg|\int_{\mathbb{R}}x^{-1}K\big(\frac{t-x}{h_n}\big)d\Big(G(x)\Big)\bigg|\nonumber\\
&:=& L^{(1)}_{n}(t)+ L^{(2)}_{n}(t)+ L^{(3)}_{n}(t)+ L^{(4)}_{n}(t).
\end{eqnarray}
 \eqref{eeeqII} implies that
\begin{eqnarray}\label{fad5}
L^{(1)}_{n}(t)&=&n^{-\frac{1}{2}}\bigg|\int_{\mathbb{R}}x^{-1} K\big(\frac{t-x}{h_n}\big)d\Big(\beta_{n,n}(x)-B_{n,n}\big(G(x)\big)\Big)\bigg| \nonumber\\
&=&n^{-\frac{1}{2}}\bigg|\int_{\mathbb{R}}\Big(\beta_{n,n}(t-uh_n)-B_{n,n}\big(G(t-uh_n)\big)\Big)d\Big(\frac{K(u)}{t-uh_n}\Big)\bigg|\nonumber\\
&\leq& n^{-\frac{1}{2}}\sup_{x>0} \Big|\beta_{n,n}(x)-B_{n,n}\big(G(x)\big)\Big|\int_{\mathbb{R}}\bigg|d\Big(\frac{K(u)}{t-uh_n}\Big)\bigg|\nonumber\\
&=&O\Big(n^{-1}\log n\Big)\bigg(\int_{-1}^{+1}\frac{K^{'}(u)}{t-uh_n}+\int_{-1}^{+1}\frac{h_n K(u)}{{(t-uh_n)}^{2}} \bigg)\nonumber\\
&\leq& \max{\Big(\sup_{x}\big|K(x)\big|,\sup_{x}\big|K^{'}(x)\big|\Big)}O\Big(n^{-1}\log n\Big)\nonumber\\
&\quad\times&\bigg(\frac{1}{h_n}\log \Big(\frac{t+h_n}{t-h_n}\Big)+\frac{2h_n}{t^2-h^{2}_{n}}\bigg)\nonumber\\
&=& O\Big(\frac{\log n}{nh_n}\Big)\Big(O(1)+O(1)\Big)\nonumber\\
&=&O\Big(\frac{\log n}{nh_n}\Big).
\end{eqnarray}
To deal with $L^{2}_{n}(t),$ observe that 
\begin{eqnarray}\label{fad6}
L^{2}_{n}(t)&\stackrel{D}{=}&n^{-\frac{1}{2}}\bigg|\int_{\mathbb{R}}x^{-1}K\big(\frac{t-x}{h_n}\big)d\Big(W\big(G(x)\big)-G(x)W(1)\Big)\bigg|\nonumber\\
&\leq& n^{-\frac{1}{2}}\bigg|\int_{\mathbb{R}}x^{-1}K\big(\frac{t-x}{h_n}\big)dW\big(G(x)\big)\bigg|\nonumber\\
&\quad +&n^{-\frac{1}{2}}\bigg|\int_{\mathbb{R}}x^{-1} K\Big(\frac{t-x}{h_n}\Big)W(1) g(x) dx\bigg|\nonumber\\
&:=& A^{'}_{1}(t)+A^{'}_{2}(t).
\end{eqnarray}
Using the facts that
\begin{eqnarray*}
\Big|h^{-\frac{1}{2}}_{n}n^{\frac{1}{2}}A^{'}_{1}(t)\Big|&\stackrel{D}{=}&h^{-\frac{1}{2}}_{n}\bigg|\int_{\mathbb{R}}x^{-1} K\Big(\frac{t-x}{h_n}\Big)g^{\frac{1}{2}}(x)d W(x)\bigg|.\\
&\stackrel{D}{=}&h^{-\frac{1}{2}}_{n}\big|N\big|{\bigg(\int_{\mathbb{R}}x^{-2}K^{2}\Big(\frac{t-x}{h_n}\Big)g(x) dx\bigg)}^{\frac{1}{2}},
\end{eqnarray*}
and
\begin{eqnarray*}
h^{-\frac{1}{2}}_{n}{\bigg(\int_{\mathbb{R}}x^{-2}K^{2}\Big(\frac{t-x}{h_n}\Big)g(x) dx\bigg)}^{\frac{1}{2}}&=&{\bigg(\int_{\mathbb{R}}v\frac{f(t-uh_n)}{(t-uh_n)}K^{2}(u) du\bigg)}^{\frac{1}{2}}\\
&\longrightarrow&{(\frac{f(t)}{t})}^{\frac{1}{2}}{\bigg(v\int_{-1}^{+1} K^{2}(u)du\bigg)}^{\frac{1}{2}},
\end{eqnarray*}
it is not difficult to show that 
\begin{eqnarray}\label{fad7}
A^{'}_{1}(t)=O_{p}\Big(n^{-\frac{1}{2}}h^{\frac{1}{2}}_{n}\Big)\quad a.s.
\end{eqnarray}
Also
\begin{eqnarray*}
\big|h^{-1}_{n} n^{\frac{1}{2}} A^{'}_{2}(t)\big|&\stackrel{D}{=}&\big|N\big|v \bigg|\int_{\mathbb{R}}K(u)f(t-uh_n) du\bigg|,\nonumber\\
&\longrightarrow& \big|N\big|v f(t) \int_{\mathbb{R}}K(u) du\nonumber\\
&=&O_{p}(1),
\end{eqnarray*}
therefore, it follows that
\begin{eqnarray}\label{fad8}
A^{'}_{2}(t)=O_{p}(h_n n^{-\frac{1}{2}}).
\end{eqnarray}
\eqref{fad6},\eqref{fad7} and \eqref{fad8} imply that
\begin{eqnarray}\label{fad9}
L^{(2)}_{n}(t)=O_{p}\big(n^{-\frac{1}{2}} h^{\frac{1}{2}}_{n}\big).
\end{eqnarray}

From Koml\'{o}s et al. \cite{12}, there  exists a sequence of Brownian bridges $\{B_n (t) , 0\leqslant t \leqslant 1\}$ such that
\begin{eqnarray}
\label{eeeq100}
\sup_{-\infty <x<\infty} \bigg| \beta_n (x) - B_n \big( G(x) \big) \bigg| = O_p \Big( n^{-\frac{1}{2}} \log n \Big).
\end{eqnarray}
Consequently,
\begin{eqnarray}\label{fad10}
L^{(3)}_{n}(t)&=&\bigg|n^{-\frac{1}{2}}\int_{\mathbb{R}}x^{-1} K\big(\frac{t-x}{h_n}\big)d\beta_{n}(x)\bigg|\nonumber\\
&\leq& \bigg|n^{-\frac{1}{2}}\int_{\mathbb{R}}x^{-1} K\big(\frac{t-x}{h_n}\big)d\Big(\beta_{n}(x)-B_{n}\big(G(x)\big)\Big)\bigg|\nonumber\\
&\quad +&\bigg|n^{-\frac{1}{2}}\int_{\mathbb{R}}x^{-1} K\big(\frac{t-x}{h_n}\big)d\Big(B_{n}\big(G(x)\big)\Big)\bigg|.\nonumber\\
\end{eqnarray}
With using \eqref{eeeq100} and similar to the term $L^{(1)}_{n}(t),$ one gets
\begin{eqnarray}\label{fad11}
&\bigg|&n^{-\frac{1}{2}}\int_{\mathbb{R}}x^{-1} K\big(\frac{t-x}{h_n}\big)d\Big(\beta_{n}(x)-B_{n}\big(G(x)\big)\Big)\bigg|\nonumber\\
&=&O_{p}\Big(\frac{\log n}{nh_{n}}\Big).
\end{eqnarray}
Similar to $L^{(2)}_{n}(t)$
\begin{eqnarray}\label{fad12}
&\bigg|&n^{-\frac{1}{2}}\int_{\mathbb{R}}x^{-1} K\big(\frac{t-x}{h_n}\big)d\Big(B_{n}\big(G(x)\big)\Big)\bigg|\nonumber\\
&=&O_{p}\Big(n^{-\frac{1}{2}} h^{\frac{1}{2}}_{n}\Big),
\end{eqnarray}
\eqref{fad10}-\eqref{fad12} conclude that 
\begin{eqnarray}\label{fad13}
L^{(3)}_{n}(t)= O_{p}\Big(\frac{\log n}{nh_n}\Big)+O_{p}\Big(n^{-\frac{1}{2}} h^{\frac{1}{2}}\Big).
\end{eqnarray}
To deal with the term of ${L}_{n}^{(4)}(t),$ observe that
\begin{eqnarray*}
h^{-1}_{n} L^{(4)}_{n}(t)&=&{h_n}^{-1}\bigg|\int_{\mathbb{R}}x^{-1} K\big(\frac{t-x}{h_n}\big)g(x)  dx\bigg|=v\bigg|\int_{\mathbb{R}} K(u) f(t-uh_n) du\bigg|\\
&\leq& v \sup_{0<x<\tau} f(x) \int_{-1}^{+1}K(u) du=O_{p}(1).
\end{eqnarray*}
Hence
\begin{eqnarray}\label{fad14}
L^{(4)}_{n}(t)=O_{p}(h_n).
\end{eqnarray}
Putting together \eqref{fad1}-\eqref{fad5}, \eqref{fad9} and  \eqref{fad13}-  \eqref{fad14} one concludes that
\begin{eqnarray}\label{fad15}
B_n ^{(2)} (t) =O_p \big(h_n\big ).
\end{eqnarray}
\eqref{fad15} implies that
\begin{eqnarray}
\label{eeeIII'}
\int ^T _0 \big| B_n ^{(2)} (t) \big| ^p (tv) ^{\frac{p}{2}} f(t) dt =O_p \big({h}^p_n
\big).
\end{eqnarray}
\eqref{setar}, \eqref{eeeqI'}, \eqref{eeeqII'} and \eqref{eeeIII'} follow
\begin{eqnarray*}
& \bigg|& \int ^T_0 |\Gamma _{n,3} (t)|^p (tv) ^{\frac{p}{2}} f(t) dt
-\int ^T_0 |B_n ^{(1)} (t)|^p (tv) ^{\frac{p}{2}}  f(t) dt \bigg| \\
& \leqslant& p 2^{p-1} \int ^T _0  |B_n ^{(2)} (t)|^p  (tv) ^{\frac{p}{2}} f(t) dt \\
&\quad+& p 2^{p-1} \bigg( \int ^T_0\big| B_n ^{(1)} (t) \big|^p  (tv)^{\frac{p}{2}} f(t) dt \bigg)^{1-\frac{1}{p}}  \bigg( \int ^T _0 \big|B_n ^{(2)} (t) \big| ^p (tv) ^{\frac{p}{2}} f(t) dt\bigg) ^{\frac{1}{p}} \\
&=&  O_p \big({h_n}^p
\big)\\
&\quad+&O_p (h_n ^{\frac{p-1}{2}}) O_p  \big(h_n\big).
\end{eqnarray*}
This last result and \eqref{eeeqII'} follow
\begin{eqnarray*}
(h_n ^{p+1} \sigma ^2 (p)) ^{-\frac{1}{2}}  \bigg\{ \int ^T _0\big| \Gamma _{n,3} (t) \big| ^p (tv)^{\frac{p}{2}} f(t) dt - h_n ^{\frac{p}{2}} m(p)\bigg\} \stackrel{D}{\longrightarrow} N(0,1).
\end{eqnarray*}
\end{proof}  
\begin{lemma}\label{fad21}
Suppose $f^{'}(x)$ exists and is bounded $(a.s.)$ on the $(0,\tau).$ Then under $\bold{K(1)}$, $\bold{K(3)}$, $\bold{K(4)}$, $\bold{G(1)}$ and conditions
\begin{eqnarray*}
h_n\rightarrow 0,\quad \frac{\log\log n}{nh^{3}_n}\rightarrow0,
\end{eqnarray*}
one can write\\

$h^{-\frac{1}{2}}_{n}\big|\hat{m}(p)-m(p)\big|\rightarrow0\quad a.s.$\quad and \quad ${\hat{\sigma}}^{2}(p)\rightarrow{\sigma}^{2}(p)\quad a.s.$
\end{lemma}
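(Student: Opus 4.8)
The plan is to reduce both claims to a single almost-sure uniform rate for $\sup_{0<x<\tau}|f_n(x)-f(x)|$ and to propagate it through the power maps defining $\hat m(p)$ and $\hat\sigma^2(p)$. Pulling the deterministic constants out of \eqref{fad20} and \eqref{mtp}--\eqref{mttp}, one has $\hat m(p)-m(p)=E|N|^p\big(\int_{\mathbb{R}}K^2\big)^{p/2}\int_0^T\big(f_n^{(p+2)/2}(t)-f^{(p+2)/2}(t)\big)\,dt$ and $\hat\sigma^2(p)-\sigma^2(p)=\sigma_1^2\big(\int_{\mathbb{R}}K^2\big)^{p}\int_0^T\big(f_n^{p+2}(t)-f^{p+2}(t)\big)\,dt$, so everything rests on estimating the two integrated power differences on $[0,T]$.

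First I would prove the sup-norm rate
\[
\sup_{0<x<\tau}\big|f_n(x)-f(x)\big|=O(h_n)+h_n^{-1}\,O\!\Big(\sqrt{\tfrac{\log\log n}{n}}\Big)\quad a.s.,
\]
splitting $f_n(t)-f(t)$ into the bias $b_n(t)-f(t)$, with $b_n(t)=\int_{\mathbb{R}}K(u)f(t-uh_n)\,du$, and the stochastic part $f_n(t)-b_n(t)$. Since $\int K=1$ by \textbf{K(4)}, a first-order Taylor expansion with $f'$ bounded and $K$ compactly supported (\textbf{K(1)}) gives $\sup_t|b_n(t)-f(t)|\le h_n\sup_x|f'(x)|\int|uK(u)|\,du=O(h_n)$. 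For the stochastic part, integration by parts (valid because $K$ has compact support, \textbf{K(1)}, and $K'$ is bounded, \textbf{K(3)}) yields $f_n(t)-b_n(t)=h_n^{-2}\int_{\mathbb{R}}\big(F_n(x)-F(x)\big)K'\big((t-x)/h_n\big)\,dx$, hence $\sup_t|f_n(t)-b_n(t)|\le h_n^{-1}\big(\int|K'|\big)\sup_x|F_n(x)-F(x)|$. The final ingredient is the iterated-logarithm rate $\sup_{0<x<\tau}|F_n(x)-F(x)|=O\big(\sqrt{(\log\log n)/n}\big)$ a.s., which under \textbf{G(1)} follows from the Gaussian strong approximation of $\alpha_n$ by $\Gamma$ in \eqref{Gamma}--\eqref{EG}, the same device already used for \eqref{do} and in Lemma \ref{eeeq*}.

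Next I would transfer this through the powers. Uniform a.s. convergence $f_n\to f$ makes $f_n$ eventually uniformly bounded on $[0,T]$, so the mean value theorem applied to $x\mapsto x^{(p+2)/2}$ and to $x\mapsto x^{p+2}$ gives, uniformly in $t\in[0,T]$, $|f_n^{(p+2)/2}(t)-f^{(p+2)/2}(t)|\le C\sup_x|f_n(x)-f(x)|$ and the analogous bound for the $(p+2)$-power. Integrating over $[0,T]$ and inserting the sup-norm rate gives
\[
h_n^{-1/2}\big|\hat m(p)-m(p)\big|=O\big(h_n^{1/2}\big)+O\!\Big(h_n^{-3/2}\sqrt{\tfrac{\log\log n}{n}}\Big)=o(1)\quad a.s.,
\]
the second term vanishing exactly under $(\log\log n)/(nh_n^3)\to0$, and
\[
\big|\hat\sigma^2(p)-\sigma^2(p)\big|\le C\,T\sup_x|f_n(x)-f(x)|=o(1)\quad a.s.,
\]
which needs only $(\log\log n)/(nh_n^2)\to0$, itself implied by the stated bandwidth conditions since $h_n\to0$.

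I expect the crux to be the almost-sure sup-norm rate for $f_n-f$, and specifically the $h_n^{-3/2}$ factor created when the external normalization $h_n^{-1/2}$ multiplies the stochastic part: it is precisely this term that forces the hypothesis $(\log\log n)/(nh_n^3)\to0$ in place of the weaker $(\log\log n)/(nh_n^2)\to0$ sufficient for mere consistency. Once the sup-norm rate and the eventual boundedness of $f_n$ are secured, the bias estimate and the variance-convergence claim ${\hat\sigma}^2(p)\to\sigma^2(p)$ are comparatively routine.
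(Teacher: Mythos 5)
Your proposal is correct and takes essentially the same route as the paper: the paper's own proof reduces both claims to the same uniform almost-sure rate $\sup_x|f_n(x)-f(x)|=O(h_n)+h_n^{-1}O\big(\sqrt{(\log\log n)/n}\big)$ (its display \eqref{fad22}, obtained by exactly your bias/stochastic split with integration by parts), the only cosmetic differences being that it transfers this rate through the powers via inequality \eqref{setar} instead of the mean value theorem, and that it imports the rate $\sup_{0<x<\tau}|F_n(x)-F(x)|=O\big(\sqrt{(\log\log n)/n}\big)$ a.s. from Lemma 5 of Fakoor and Zamini \cite{6}. One point to tighten in your sketch: that key rate is an iterated-logarithm result (James's functional LIL, which is the actual device behind \eqref{do}), not a consequence of the Gaussian strong approximation by $\Gamma(\cdot,n)$ alone, since the approximation plus Gaussian tail bounds and Borel--Cantelli only give $\sup_x|F_n(x)-F(x)|=O\big(\sqrt{(\log n)/n}\big)$ a.s. unless one additionally invokes an LIL for the approximating Kiefer-type Gaussian sequence, and the resulting quantity $\sqrt{(\log n)/(nh_n^3)}$ is not controlled by the lemma's stated bandwidth hypotheses.
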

\begin{proof}
By \eqref{setar}, one can see
\begin{eqnarray*}
\big|\hat{m}(p)-m(p)\big|&\leq& C(K) \int_{0}^{T}\Big|f^{\frac{p+2}{2}}_{n}(x)-f^{\frac{p+2}{2}}(x)\Big|dx\\
&\leq& C(K) (\frac{p+2}{2}) 2^{(\frac{p+2}{2})-1}\int_{0}^{T}{\Big|f_{n}(x)-f(x)\Big|}^{(\frac{p+2}{2})} dx\\
&\quad +& C(K) \big(\frac{p+2}{2}\big)2^{(\frac{p+2}{2})-1}{\bigg(\int_{0}^{T} f^{\frac{(p+2)}{2}}(x) dx\bigg)}^{1-\frac{2}{(p+2)}}\\
&\quad \times& {\bigg(\int_{0}^{T} {\Big|f_n(x)-f(x)\Big|}^{\frac{(p+2)}{2}}(x) dx\bigg)}^{\frac{2}{(p+2)}},
\end{eqnarray*}
where $C(K)$ is a constant with respect to $K.$

Now, in order to show $h^{-\frac{1}{2}}_{n}\big|\hat{m}(p)-m(p)\big|\rightarrow 0\quad a.s.,$ it is sufficient to show that
\begin{eqnarray*}
h^{-\frac{1}{2}}_{n}{\bigg(\int_{0}^{T} {\Big|f_n(x)-f(x)\Big|}^{\frac{(p+2)}{2}}(x) dx\bigg)}^{\frac{2}{(p+2)}}\longrightarrow 0\quad a.s.
\end{eqnarray*}
 But by Lemma 5 of Fakoor and Zamini \cite{6}, and Assumptions $\bold{K(1)},$ $\bold{K(3)},$$\bold{K(4)},$ and $\bold{G(1)},$ one can get
\begin{eqnarray}\label{fad22}
\sup_{x}\big|f_{n}(x)-f(x)\big|&\leq& \sup_{x} \bigg|h^{-1}_{n} \int_{\mathbb{R}}K\big(\frac{x-y}{h_n}\big) dF_{n}(y)-h^{-1}_{n} \int_{\mathbb{R}}K\big(\frac{x-y}{h_n}\big)dF(y)\bigg|\nonumber\\
&\quad +&\sup_{x}\bigg|\int_{\mathbb{R}}\Big(f(x-h_nt)-f(x)\Big)K(t) dt\bigg|\nonumber\\
&\leq& h^{-1}_{n}\sup_{0<x<\tau}\big|F_{n}(x)-F(x)\big|\int_{\mathbb{R}}\big|dK(t)\big|\nonumber\\
&\quad +& h_{n} \sup_{x} \big|f^{'}(x)\big|\int_{-1}^{+1}|t| \big|K(t)\big| dt\nonumber\\
&=&h^{-1}_{n}O\Big({\big(\frac{\log \log n}{n}\big)}^{\frac{1}{2}}\Big)+ O(h_{n})\quad a.s.
\end{eqnarray}
Clearly by \eqref{fad22}, one has
\begin{eqnarray*}
h^{-\frac{1}{2}}_{n}{\bigg(\int_{0}^{T} {\Big|f_n(x)-f(x)\Big|}^{\frac{(p+2)}{2}} dx\bigg)}^{\frac{2}{(p+2)}}&=& O\Big({\big(\frac{\log \log n}{nh^{3}_{n}}\big)}^{\frac{1}{2}}\Big)\\
&\quad +&O\big(h^{\frac{1}{2}}_{n}\big)\quad a.s.
\end{eqnarray*}
Conditions $h_n\rightarrow0$\quad and\quad$\frac{\log \log n}{nh^{3}_{n}}\rightarrow 0,$ conclude that
\begin{eqnarray*}
h^{-\frac{1}{2}}_{n}\big|\hat{m}(p)-m(p)\big|\longrightarrow 0\quad a.s.
\end{eqnarray*}
The proof of ${\hat{\sigma}}^{2}(p)\longrightarrow {\sigma}^{2}(p)\quad a.s.$ is similar and will not be given.
\end{proof}

\end{document}